\numberwithin{equation}{section}
\theoremstyle{plain}
\newtheorem{theorem}[subsection]{Theorem}
\newtheorem{proposition}[subsection]{Proposition}
\newtheorem{lemma}[subsection]{Lemma}
\theoremstyle{definition}
\renewcommand{\leq}{\leqslant}
\renewcommand{\geq}{\geqslant}
\renewcommand{\P}{\mathbb{P}}
\providecommand{\Sym}{\mathop{\rm Sym}\nolimits}
\providecommand{\supp}{\mathop{\rm supp}\nolimits}
\newcommand{\N}{\mathbb{N}}
\begin{document}

\title{On a non-abelian Balog-Szemer{\'e}di-type lemma}

\author{Tom Sanders}
\address{Department of Pure Mathematics and Mathematical Statistics\\
University of Cambridge\\
Wilberforce Road\\
Cambridge CB3 0WA\\
England } \email{t.sanders@dpmms.cam.ac.uk}

\begin{abstract}
We show that if $G$ is a group and $A \subset G$ is a finite set with $|A^2| \leq K|A|$, then there is a symmetric neighbourhood of the identity $S$ with
\begin{equation*}
S^k \subset A^2A^{-2} \textrm{ and } |S| \geq \exp(-K^{O(k)})|A|.
\end{equation*}
\end{abstract}

\maketitle

\setcounter{section}{1}

Suppose that $G$ is a group and $A \subset G$ is a finite set with doubling $K$, that is $|A^2| \leq K|A|$.  Clearly if $A$ is a collection of free generators then $K=|A|$, but if $K$ is much smaller then it tells us that there must be quite a lot of overlap in the products $aa'$ with $a, a' \in A$.  The extreme instance of this is when $K=1$ and $A$ is necessarily a coset of a subgroup of $A$.  We are interested in the extent to which some sort of structure persists when $K$ is a bit bigger than $1$, say $O(1)$ as $|A| \rightarrow \infty$.
 
 If $G$ is abelian then the structure of $A$ is comprehensively described by the Green-Ruzsa-Fre{\u\i}man theorem \cite{BJGIZR}, but in the non-abelian case no analogue is known.  A number of attempts have been made establishing some remarkable results, see \cite{BJGEB2,BJGEB1,DFNHKIP,EH} and \cite{TCTFrei} for details of these, but a clear description has not yet emerged.  The interested reader may wish to consult \cite{BJG.AGA} for a discussion of the state of affairs.

Fre{\u\i}man-type theorems for abelian groups are applied to great effect throughout additive combinatorics, and many of these applications can make do with a considerably less detailed description of the set $A$.  Moreover, additive combinatorics is now beginning to explore many non-abelian questions and so naturally a Fre{\u\i}man-type theorem in this setting would be very useful.   This is the motivation behind our present work: we want to trade in some of the strength of the description of $A$ in exchange for the increased generality of working in arbitrary groups.  Tao proved a result in this direction in \cite{TCTFrei} for which we require a short definition.  A set $S$ in a (discrete) group $G$ is a \emph{symmetric neighbourhood of the identity} if $1_G \in S$ and $S=S^{-1}$.
\begin{theorem}[{\cite[Proposition C.3]{TCTFrei}}]
Suppose that $G$ is a group, $A \subset G$ is a finite non-empty set such that $|AA^{-1}| \leq K|A|$, and $k \in \N$ and $\epsilon \in (0,1]$ are a pair of parameters.  Then there is a symmetric neighbourhood of the identity $S \subset AA^{-1}$ with $|S| = \Omega_{K,k,\epsilon}(|A|)$ such that for all $l \leq k$
\begin{equation*}
\P(a_1\dots a_{l} \in AA^{-1}| a_1,\dots,a_l \in S) \geq 1-\epsilon.
\end{equation*}
\end{theorem}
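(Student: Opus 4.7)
The plan is to construct $S$ as a sublevel set of a natural length function on $G$ associated with $A$. The probabilistic conclusion will then reduce to a deterministic triangle-inequality argument, and all the substantive work becomes a size lower bound for this sublevel set.

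Define $d : G \to [0, 1]$ by
\[
d(g) := \frac{|A \triangle gA|}{2|A|} = 1 - \frac{|A \cap gA|}{|A|}.
\]
Straightforward verifications give $d(1_G) = 0$; the symmetry $d(g^{-1}) = d(g)$, via the bijection $x \mapsto g^{-1} x$ between $A \triangle gA$ and $g^{-1}A \triangle A$; and the triangle inequality $d(g_1 g_2) \leq d(g_1) + d(g_2)$, which follows from the inclusion $A \triangle g_1 g_2 A \subset (A \triangle g_1 A) \cup g_1 (A \triangle g_2 A)$ together with the left-translation invariance of cardinality. Choosing $\eta = 1/(k+1)$ and putting $S := \{g \in G : d(g) \leq \eta\}$, one obtains a symmetric neighbourhood of $1_G$ contained in $AA^{-1}$ (since $d(g) < 1$ iff $A \cap gA \neq \emptyset$), and subadditivity of $d$ yields $S^l \subset \{g : d(g) \leq l\eta\} \subset AA^{-1}$ for every $l \leq k$; the conditional probability in the conclusion is therefore actually equal to $1$, a fortiori $\geq 1 - \epsilon$.

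The sole substantive task is then to verify $|S| \geq c(K, k, \epsilon)|A|$. The naive averaging $\sum_g |A \cap gA| = |A|^2$ on the support $AA^{-1}$ of size $\leq K|A|$ yields only the upper bound $|S| \leq |A|/(1 - \eta)$; exhibiting many $g$ with $|A \cap gA|$ close to the maximum $|A|$ requires genuine input from the doubling hypothesis. My intended approach is a dyadic/bootstrap argument: by Cauchy--Schwarz the additive energy $E(A) := \sum_g |A \cap gA|^2$ is at least $|A|^3/K$, so some popularity threshold of order $|A|/K$ is attained on a set of size $\gtrsim |A|/K^{O(1)}$; one then iterates, using non-abelian Pl\"unnecke--Ruzsa estimates to pass to successively larger ``cores'' (intersections of the form $A^n \cap g A^n$) and thereby bootstrap the threshold up from $|A|/K$ to $(1 - \eta)|A|$ at a multiplicative cost depending only on $K$ and $\eta$. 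This bootstrap is the main obstacle; the length-function formalism itself is bookkeeping.
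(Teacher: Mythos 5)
This is Tao's Proposition~C.3, which the paper cites rather than reproves (remarking only that Tao's argument uses the Szemer\'edi regularity lemma and hence gives tower-type bounds), so there is no in-paper proof to compare against. The paper's discussion immediately after the statement is, however, directly relevant to your approach --- and rules it out.

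Your bookkeeping is sound and is a standard reformulation: $d$ is a left-invariant pseudometric on $G$, and your sublevel set $\{g : d(g) \leq 1-\theta\}$ is precisely the symmetry set $\Sym_\theta(A)$ that the paper introduces a little further on; the sub-multiplicativity of symmetry sets is exactly the triangle inequality for $d$. You are also right that $S := \Sym_{1-1/(k+1)}(A)$ satisfies $S^k \subset AA^{-1}$, so your conditional probability would equal $1$. But notice what this means: you have silently strengthened the theorem to the case $\epsilon = 0$, claiming a symmetric neighbourhood $S$ with $|S| \geq c(K,k)|A|$ (no $\epsilon$-dependence, since your $S$ does not depend on $\epsilon$) and $S^k \subset AA^{-1}$. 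The paper says explicitly that this cannot be done: ``One would like to remove the $\epsilon$-dependence in Tao's result, but this cannot be done (even in the abelian case \ldots) if we are only prepared to accept containment in the two-fold product set $AA^{-1}$.'' The reference is to Ruzsa's niveau sets in $\F_2^n$, which have bounded doubling and yet have $|\Sym_{1-\eta}(A)|$ a vanishing fraction of $|A|$ for any fixed $\eta < 1$. So the ``sole substantive task'' you defer --- the lower bound on $|S|$ --- is not merely difficult, it is false as stated.

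Your bootstrap sketch does not rescue this. The first step is fine: Cauchy--Schwarz gives $E(A) \geq |A|^3/K$, hence $|\Sym_{c/K}(A)| \geq c|A|/K$ for a suitable constant (this is essentially the calculation inside the paper's iteration lemma). But you cannot raise the threshold from order $1/K$ up to $1 - 1/(k+1)$ while keeping the same ambient set $A$. Passing to cores $A^n \cap gA^n$ as you suggest does raise the threshold, but the resulting sublevel set now sits inside $A^nA^{-n}$, not $AA^{-1}$: you have traded the depth of the product set for the height of the threshold. That trade-off is exactly what the paper makes, and more economically --- its key proposition works with $\Sym_{1-\epsilon}(A'A)$ for a suitable subset $A' \subset A$ obtained by a density-increment iteration on the quantity $|A'A|$, so the symmetry set lands in $(A'A)(A'A)^{-1} \subset A^2A^{-2}$. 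This is precisely why the paper's $\epsilon$-free theorem concludes $S^k \subset A^2A^{-2}$ rather than $S^k \subset AA^{-1}$. The content of Tao's result, and the reason it genuinely needs both $\epsilon > 0$ and regularity-type machinery, is that it stays inside the two-fold product $AA^{-1}$ by tolerating a small fraction of bad products rather than none.
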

The proof uses the celebrated regularity lemma of Szemer{\'e}di and so the resulting bounds are of tower type. 

One would like to remove the $\epsilon$-dependence in Tao's result, but this cannot be done (even in the abelian case, see \cite{IZRAA}) if we are only prepared to accept containment in the two-fold product set $AA^{-1}$.  We shall prove the following $\epsilon$-free result.
\begin{theorem}\label{thm.main}
Suppose that $G$ is a group, $A \subset G$ is a finite non-empty set such that $|A^2| \leq K|A|$, and $k \in \N$ is a parameter.  Then there is a symmetric neighbourhood of the identity $S$ such that
\begin{equation*}
S^k \subset A^2A^{-2} \textrm{ and } |S| \geq  \exp(-K^{O(k)})|A|.
\end{equation*}
\end{theorem}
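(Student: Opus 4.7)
The plan is to define a set of left almost-stabilisers of $A^2$. Setting $\alpha := 1/(2k)$, let
\begin{equation*}
S := \{s \in G : |sA^2 \triangle A^2| \leq \alpha|A^2|\}.
\end{equation*}
The set $S$ is symmetric (since $|s^{-1}A^2\triangle A^2| = |A^2\triangle sA^2|$) and contains the identity. Left-translation preserves counting measure, so the symmetric-difference triangle inequality gives, for any $s_1,\dots,s_k\in S$,
\begin{equation*}
|s_1\cdots s_kA^2\triangle A^2| \leq \sum_{i=1}^k |s_iA^2\triangle A^2| \leq k\alpha|A^2| = \tfrac{1}{2}|A^2|,
\end{equation*}
whence $s_1\cdots s_kA^2 \cap A^2 \neq \emptyset$, and therefore $s_1\cdots s_k\in A^2A^{-2}$. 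Hence $S^k\subset A^2A^{-2}$, and the entire problem reduces to showing that $|S|\geq \exp(-K^{O(k)})|A|$.

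To prove this lower bound, write $f := |A^2|^{-1/2}1_{A^2}$, so that $\|f\|_2=1$ and the condition $s\in S$ becomes $\|\tau_s f - f\|_2^2 \leq \alpha$. I would now run a non-abelian Croot--Sisask almost-periodicity argument: sample $\mathbf{a} = (a_1,\dots,a_m)$ uniformly from $A^m$ and consider
\begin{equation*}
F_\mathbf{a} := \frac{1}{m}\sum_{i=1}^m \tau_{a_i}f \in \ell^2(G).
\end{equation*}
A Rosenthal/Khintchine-type inequality in the type-$2$ space $\ell^2$ shows that $F_\mathbf{a}$ concentrates about its mean at $\ell^2$-scale $O(m^{-1/2})$; taking $m$ of order $k$ makes the fluctuation $O(\alpha^{1/2})$. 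A pigeonhole over the $|A|^m$ tuples, grouping them by the value of $F_\mathbf{a}$ rounded at an appropriate scale, then yields a large family of tuples all sharing the same rounded $F_\mathbf{a}$; for any two such $\mathbf{a},\mathbf{a}'$, averaging over the $m$ coordinates shows that many of the elements $a_i^{-1}a_i'$ lie in $S$. Combining this with Tao's non-commutative Pl\"unnecke--Ruzsa estimate $|A^n|\leq K^{O(n)}|A|$--used both to bound the effective support of $F_\mathbf{a}$ and to control the range of the pigeonhole--yields the required bound $|S|\geq\exp(-K^{O(k)})|A|$.

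The main obstacle is the non-abelian Croot--Sisask step itself. The $\ell^2$-concentration of $F_\mathbf{a}$ carries over to arbitrary groups with no essential change because left-translation is $\ell^2$-isometric; the delicate point is passing from a pigeonholed pair of tuples to a genuine collection of almost-periods whose cardinality is tied to $|A|$. This is precisely where the non-commutative Pl\"unnecke--Ruzsa polynomial-growth estimates are brought in, and where the $K^{O(k)}$ loss in the exponent appears.
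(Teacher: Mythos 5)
Your approach is genuinely different from the paper's: Sanders proves Theorem \ref{thm.main} via symmetry sets $\Sym_{1-\epsilon}(A'A)$, a Katz--Koester-type observation $(A'\cap tA')A \subset (A'A)\cap t(A'A)$, and an iterative density-increment lemma, whereas you aim at a Croot--Sisask almost-periodicity argument. That route is legitimate in principle --- indeed Sanders remarks at the end of the paper that Croot and Sisask's method applies here and even yields the stronger bound $|S|\geq\exp(-O(k^2K\log K))|A|$ --- and your reduction of the containment $S^k\subset A^2A^{-2}$ to a clean triangle inequality for symmetric differences is correct and pleasant.

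However, the lower bound on $|S|$ has a genuine gap, and it stems from the choice of $S$. You take $S=\{s:|sA^2\triangle A^2|\leq\alpha|A^2|\}$, equivalently $\|\tau_s 1_{A^2}-1_{A^2}\|_2^2\leq\alpha|A^2|$, and then sample $F_\mathbf{a}=\frac{1}{m}\sum_i\tau_{a_i}f$ with $f$ proportional to $1_{A^2}$. But the Rosenthal/type-$2$ concentration gives $F_\mathbf{a}\approx\E F_\mathbf{a}=\mu_A\ast f$, a \emph{smoothed} version of $f$, not $f$ itself; there is no reason for $\mu_A\ast 1_{A^2}$ to be close to $1_{A^2}$. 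Consequently the Croot--Sisask machinery produces $t$'s with $\|\tau_t(\mu_A\ast 1_{A^2})-\mu_A\ast 1_{A^2}\|_2$ small, and this does \emph{not} control $|tA^2\triangle A^2|$: the function $\mu_A\ast 1_{A^2}$ can be as small as $1/|A|$ on large parts of $A^2$, so the supports can move a lot while the $\ell^2$ distance barely changes. A second, smaller issue: the pigeonhole you describe, grouping tuples by rounded $F_\mathbf{a}$ and concluding that ``many of the $a_i^{-1}a_i'$ lie in $S$'' for arbitrary colliding pairs $(\mathbf{a},\mathbf{a}')$, is not the Croot--Sisask step; the argument needs the covariance $F_{(ya_1,\dots,ya_m)}=\tau_yF_\mathbf{a}$, i.e.\ pigeonholing over the left-translation orbits in $A^m$, which is also where the Ruzsa-type bound $|A^{-1}A|\leq K^2|A|$ enters. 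The standard repair is to work from the outset with $g:=\mu_A\ast 1_A$ rather than $1_{A^2}$: define $S$ by $\|\tau_sg-g\|_2\leq\tfrac{1}{2k}\|g\|_2$, deduce $S^k\subset A^2A^{-2}$ from the fact that disjoint supports force $\|\tau_tg-g\|_2=\sqrt2\,\|g\|_2$, and then the orbit pigeonhole does give the size bound. As written, though, the passage from the $\ell^2$ almost-periods you can actually produce to elements of your $S$ is missing.
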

It should be remarked that in the abelian setting the result follows from Green-Ruzsa modelling and Bogolio{\`u}boff's lemma.  Indeed, this essentially amounts to following the proof of the  Green-Ruzsa-Fre{\u\i}man theorem stopping before the covering argument.  The resulting bound has significantly better $k$ dependence, as it gives $|S| \geq k^{-K^{O(1)}}|A|$.

One of the main applications of Theorem \ref{thm.main} is to produce pairs of sets that are `almost invariant'.  Indeed, if $|A^3| =O(|A|)$ then one can apply the theorem to get a large set $S$ such that 
\begin{equation*}
A \subset S^kA \subset A^2A^{-2}A.
\end{equation*}
By the non-abelian Pl{\"u}nnecke-Ruzsa inequalities of Tao \cite{TCTNC} we have that $|A^2A^{-2}A| = O(|A|)$ and hence by the pigeon-hole principle there is some $l \leq k-1$ such that
\begin{equation*}
|SS^lA| \leq (1+O(1/k))|A| \leq (1+O(1/k))|S^lA|.
\end{equation*}
Writing $A':=S^lA$ we see that the pair $(S,A')$ is almost invariant in the sense that $SA' \approx A'$ with the accuracy of approximation increasing as $k$ increases.

Exactly this argument is given as a `cheat' argument for the proof of \cite[Proposition 5.1]{TCTFrei} where Tao applies \cite[Proposition C.3]{TCTFrei} and first sketches a proof assuming $\epsilon=0$. In view of the above that `cheat' is now sufficient. (In fact this entails a very slight weakening of the conclusion, but the resulting proposition is still more than sufficient for its intended use.)  A similar pigeon-holing argument, but this time on multiple scales is also used in \cite{TSIFEA} on the way to proving a weak non-abelian Fre{\u\i}man-type theorem for so-called multiplicative pairs.

We turn now to the proof of Theorem \ref{thm.main} which uses symmetry sets, popularised in the abelian setting by the book \cite{TCTVHV}.  Suppose that $G$ is a group. Recall that the convolution of two functions $f,g \in \ell^1(G)$ is defined by
\begin{equation*}
f \ast g(x)=\sum_{y \in G}{f(y)g(y^{-1}x)},
\end{equation*}
so that if $A,B \subset G$ then
\begin{equation*}
\supp 1_A \ast 1_B = AB \textrm{ and } 1_A \ast 1_B(x) = |A \cap xB^{-1}|.
\end{equation*}
Given $\eta \in (0,1]$, the \emph{symmetry set of $A$ at threshold $\eta$} is 
\begin{equation*}
\Sym_{\eta}(A):=\{x \in G: 1_A \ast 1_{A^{-1}}(x) \geq \eta |A|\}.
\end{equation*}
It is immediate that $\Sym_\eta(A)$ is a symmetric neighbourhood of the identity contained in $AA^{-1}$, and that we have the nesting property
\begin{equation*}
\Sym_\eta(A) \subset \Sym_{\eta'}(A) \textrm{ whenever } \eta \geq \eta'.
\end{equation*}
A straightforward pigeon-hole argument shows that they also enjoy the following useful sub-multiplicativity property:
\begin{equation*}
\Sym_{1-\epsilon}(A).\Sym_{1-\epsilon'} \subset \Sym_{1-(\epsilon+ \epsilon')}(A)
\end{equation*}
for all $\epsilon,\epsilon' \in [0,1)$ with $\epsilon+\epsilon' < 1$.  See \cite[Lemma 2.33]{TCTVHV} for the abelian details which are exactly the same.

Our main result is the following which provides a plentiful supply of large symmetry sets with threshold close to $1$.
\begin{proposition}\label{prop.intit}
Suppose that $G$ is a group, $A$ is a non-empty subset of $G$ with $|A^2| \leq K|A|$, and $\epsilon \in (0,1]$ is a parameter. Then there is a non-empty set $A'\subset A$ such that
\begin{equation*}
|\Sym_{1-\epsilon}(A'A)| \geq \exp(-K^{O(1/\log (1/(1-\epsilon)))}\log K)|A|.
\end{equation*}
\end{proposition}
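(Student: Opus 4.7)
My approach to Proposition~\ref{prop.intit} is iterative: I would build a descending chain of subsets $A=A_0\supseteq A_1\supseteq\cdots\supseteq A_T=A'$ and boost the threshold of the symmetry set of $A_iA$ from a small value $\eta_0$ up to the target $1-\epsilon$ at a controlled cost per step. The base case, applied with $B_0=A_0A=A^2$, is an observation: since $|B_0|\geq |A|$ and $|B_0B_0^{-1}|\leq K^{O(1)}|A|$ by Tao's non-abelian Pl\"unnecke--Ruzsa inequality~\cite{TCTNC}, a short double count using $\sum_x 1_{B_0}\ast 1_{B_0^{-1}}(x)=|B_0|^2$, $\supp(1_{B_0}\ast 1_{B_0^{-1}})\subseteq B_0B_0^{-1}$, and $\|1_{B_0}\ast 1_{B_0^{-1}}\|_\infty\leq |B_0|$ gives $|\Sym_{\eta_0}(B_0)|\geq |B_0|/2$ with $\eta_0=K^{-O(1)}$.

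The core of the proof is a one-step threshold boost: given $A_i\subseteq A$ with $|\Sym_{\eta_i}(A_iA)|$ controlled, produce $A_{i+1}\subseteq A_i$ for which the symmetry set of $A_{i+1}A$ has strictly larger threshold $\eta_{i+1}>\eta_i$, while losing only a factor of $K^{-O(1)}$ in size. I would try this by locating a ``popular'' element $g\in\Sym_{\eta_i}(A_iA)$, restricting $A_i$ to the fibre realising the corresponding overlap of $A_iA$ with $gA_iA$, and then invoking the sub-multiplicativity inclusion $\Sym_{1-\delta_1}\cdot\Sym_{1-\delta_2}\subseteq\Sym_{1-\delta_1-\delta_2}$ together with an almost-periodicity / Croot--Sisask-type extraction applied to $1_{A_i}\ast 1_A$ to amplify the peak of the \emph{new} convolution $1_{A_{i+1}A}\ast 1_{(A_{i+1}A)^{-1}}$. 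Non-abelian Pl\"unnecke--Ruzsa applied to higher product sets $A^nA^{-n}$ controls the growth of the auxiliary sets and the per-step loss.

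To match the claimed bound, the per-step boost is chosen so that $T=K^{O(1/\log(1/(1-\epsilon)))}$ stages suffice to take the threshold from $\eta_0=K^{-O(1)}$ up to $1-\epsilon$; with $K^{-O(1)}$ loss per stage, the cumulative loss is $K^{-O(T)}=\exp(-K^{O(1/\log(1/(1-\epsilon)))}\log K)$, as required. The main obstacle is the one-step boost: ensuring that the refinement $A_{i+1}\subseteq A_i$ genuinely improves the threshold of the \emph{new} symmetry set of $A_{i+1}A$ (rather than of $A_iA$) while paying only a polynomial-in-$K$ factor in size. A naive pigeonhole is far too wasteful, so the argument must orchestrate carefully the sub-multiplicativity of symmetry sets, non-abelian Pl\"unnecke--Ruzsa for higher products, and almost-periodicity extractions to deliver the right quantitative trade-off.
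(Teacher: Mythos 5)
Your proposal is structurally different from the paper's proof, and the core step you need — the ``one-step threshold boost'' — is a genuine gap that you yourself acknowledge you cannot fill. Let me spell out why the proposed mechanism does not work and what the paper does instead.

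The central problem with your plan is that you want to run the threshold upward: start at $\eta_0=K^{-O(1)}$ and boost to $1-\epsilon$, losing a polynomial-in-$K$ factor per step. But the sub-multiplicativity inclusion $\Sym_{1-\delta_1}\cdot\Sym_{1-\delta_2}\subseteq\Sym_{1-\delta_1-\delta_2}$ goes the \emph{wrong way}: it composes sets at thresholds near $1$ to produce sets at lower thresholds, and if your starting threshold is $\eta_0=K^{-O(1)}$ (so $\delta_0=1-K^{-O(1)}$ is close to $1$), any composition pushes $\delta$ past $1$ and the inclusion is vacuous. There is no known lever by which you can take a symmetry set at a tiny threshold and produce one at a threshold near $1$ for a refined set, paying only $K^{-O(1)}$ per boost; indeed your own text flags this as ``the main obstacle'' and gestures at almost-periodicity without an argument. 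Invoking Croot--Sisask here is also circular for the purposes of reviewing this paper: the paper explicitly notes (final paragraph) that Croot and Sisask's method is a \emph{different} route which gives a \emph{better} bound, $\exp(-O(k^2K\log K))$, not the $\exp(-K^{O(k)}\log K)$ bound you are targeting. If you actually carried out a Croot--Sisask argument you would not recover the shape of the bound in the Proposition.

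The paper's actual mechanism keeps the threshold fixed at $1-\epsilon$ throughout and iterates on a different quantity. Lemma~\ref{lem.itlem} gives a dichotomy: given $A'\subseteq A$ with $|A'|\geq c|A|$ and $|A'A|\leq K|A|$, either (i) one can pass to $A''\subseteq A'$ with $|A''|\geq c^4|A|/2K$ and $|A''A|\leq (1-\epsilon)|A'A|$, or (ii) $|\Sym_{1-\epsilon}(A'A)|\geq c^3|A|/2K$ already. The key observation (attributed to Katz--Koester) is $(A'\cap tA')A\subseteq (A'A)\cap t(A'A)$: if $|(A'\cap tA')A|$ is not much smaller than $|A'A|$, then $t\in\Sym_{1-\epsilon}(A'A)$, and a Cauchy--Schwarz/double-count shows that many such $t$ exist. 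Since $|A'_iA|\geq|A|$ always, branch (i) can fire at most $O(\log K/\log(1/(1-\epsilon)))$ times, and the quartic size loss $c\mapsto c^4/2K$ per step compounds to exactly the doubly-exponential bound claimed. Notice in particular that the density loss per step in the paper is \emph{not} $K^{-O(1)}$ with a fixed exponent; it is quartic in the current density, and this compounding is precisely where the $4^i$ (hence the $K^{O(1/\log(1/(1-\epsilon)))}$ in the exponent) comes from. Your accounting ($K^{-O(1)}$ loss per step over $T=K^{O(1/\log(1/(1-\epsilon)))}$ steps) describes a different, and unproven, per-step behaviour.

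In short: your iteration variable (the threshold $\eta_i$) is the wrong one, the sub-multiplicativity you invoke points in the wrong direction, and the one-step boost you need is not established. The paper instead iterates on $|A'_iA|$ at the fixed final threshold $1-\epsilon$ via a dichotomy, and the termination is forced by the trivial lower bound $|A'_iA|\geq|A|$. You should rework the argument around that dichotomy rather than trying to amplify a weak symmetry-set threshold.
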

One perhaps expects $\epsilon$ to be close to $0$, where $1/\log(1/(1-\epsilon)) = O(\epsilon^{-1})$ is a strong estimate and would simplify the expression above.  However, Tao has pointed out that the result already has content for $\epsilon =1-K^{-\eta}$ and this has been used in the abelian setting in \cite{TSSLS}.  

With this in hand the proof of our main theorem is immediate.
\begin{proof}[Proof of Theorem \ref{thm.main}]
We apply Proposition \ref{prop.intit} with parameter $\epsilon:=1/(k+1)$ to get a non-empty set $A' \subset A$ such that
\begin{equation*}
|\Sym_{1-\epsilon}(A'A)| \geq \exp(-K^{O(k)})|A|.
\end{equation*}
However, by the sub-multiplicativity property of symmetry sets we have
\begin{equation*}
\Sym_{1-\epsilon}(A'A)^k \subset \Sym_{1-k/(k+1)}(A'A) \subset A'A(A'A)^{-1} \subset A^2A^{-2}.
\end{equation*}
The result follows on setting $S:=\Sym_{1-\epsilon}(A'A)$.
\end{proof}
The proof of the proposition involves iterating the following lemma.
\begin{lemma}\label{lem.itlem}
Suppose that $G$ is a group, $A \subset G$ is non-empty and finite, $A' \subset A$ has $|A'| \geq c|A|$ and $|A'A|\leq K|A|$, and $\epsilon \in (0,1]$ is a parameter. Then at least one of the following is true:
\begin{enumerate}
\item there is a subset $A'' \subset A' \subset A$ such that
\begin{equation*}
|A''| \geq c^4|A|/2K \textrm{ and }|A''A| \leq K(1-\epsilon)|A|;
\end{equation*}
\item we have the bound
\begin{equation*}
|\Sym_{1-\epsilon}(A'A)| \geq c^3|A|/2K.
\end{equation*}
\end{enumerate}
\end{lemma}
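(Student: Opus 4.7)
My plan is to prove the contrapositive: assume case (i) fails, and construct the symmetry set required by case (ii).

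The central observation is a \emph{translation principle}. For any $g \in G$, consider the subset $A''_g := g^{-1}A' \cap A' = \{y \in A' : gy \in A'\} \subset A'$, which has $|A''_g| = |A' \cap gA'| = 1_{A'} \ast 1_{A'^{-1}}(g)$ and satisfies $gA''_g \subset A'$. The latter together with $A''_g \subset A'$ gives the two-sided inclusion
\[
A''_g A \;\subset\; A'A \;\cap\; g^{-1}(A'A).
\]
So whenever $|A''_g| \geq c^4|A|/(2K)$, the failure of case (i) applied to $A''_g$ forces $|A''_g A| > K(1-\epsilon)|A| \geq (1-\epsilon)|A'A|$, and hence $|A'A \cap g^{-1}(A'A)| > (1-\epsilon)|A'A|$. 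Using the symmetry identity $1_{A'A}\ast 1_{(A'A)^{-1}}(g) = 1_{A'A}\ast 1_{(A'A)^{-1}}(g^{-1})$ one deduces $g \in \Sym_{1-\epsilon}(A'A)$. In short,
\[
T := \{g \in G : 1_{A'}\ast 1_{A'^{-1}}(g) \geq c^4|A|/(2K)\} \;\subset\; \Sym_{1-\epsilon}(A'A).
\]

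It remains to show $|T| \geq c^3|A|/(2K)$. This is a popular-value count for the non-negative function $F(g) := 1_{A'}\ast 1_{A'^{-1}}(g)$, which satisfies $\sum_g F(g) = |A'|^2 \geq c^2|A|^2$, $\|F\|_\infty \leq |A'|$, and $\supp F \subset A'A'^{-1}$. Splitting $\sum F$ at the threshold $\tau := c^4|A|/(2K)$ yields $|A'|^2 \leq |A'|\cdot|T| + \tau\cdot|A'A'^{-1}|$, so one needs good control of $|A'A'^{-1}|$. Ruzsa's triangle inequality, applied with $X = Y = A'$ and $Z = A^{-1}$, gives
\[
|A'A'^{-1}|\cdot|A| \;\leq\; |A'A|\cdot|A^{-1}A'^{-1}| \;=\; |A'A|^2 \;\leq\; K^2|A|^2,
\]
hence $|A'A'^{-1}| \leq K^2|A|$, and plugging back into the Markov-type split gives the required bound in the favourable parameter regime.

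The main obstacle is the quantitative counting step: the crude bound $|A'A'^{-1}| \leq K^2|A|$ alone is tight only when $c^2K$ is small, and obtaining $|T| \geq c^3|A|/(2K)$ in all regimes appears to require either a sharper non-abelian Plünnecke--Ruzsa estimate, a second-moment refinement of the popular-value argument, or an iteration that trades in some slack via the sub-multiplicativity $\Sym_{1-\epsilon}\cdot\Sym_{1-\epsilon'}\subset\Sym_{1-\epsilon-\epsilon'}$ highlighted earlier in the paper. This final piece of bookkeeping is where the bulk of the technical work sits, but the inclusion $T \subset \Sym_{1-\epsilon}(A'A)$ is the conceptual heart of the argument.
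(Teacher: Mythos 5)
Your inclusion $T \subset \Sym_{1-\epsilon}(A'A)$ is exactly the conceptual heart of the paper's argument (the observation $(A' \cap tA')A \subset (A'A) \cap t(A'A)$, credited to Katz--Koester), and that part is correct. But as you honestly flag, the first-moment popular-value count does not close: splitting $\sum_g 1_{A'}\ast 1_{A'^{-1}}(g) = |A'|^2$ at threshold $\tau = c^4|A|/(2K)$ gives $|A'|^2 \leq |A'||T| + \tau\,|A'A'^{-1}|$, and even with the Ruzsa triangle bound $|A'A'^{-1}| \leq K^2|A|$ the error term $\tau K^2|A| = \tfrac{1}{2}c^4 K|A|^2$ can exceed the main term $c^2|A|^2$ once $c^2K \geq 2$. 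This is a genuine gap, not a matter of bookkeeping: in the regime where the iteration in Proposition \ref{prop.intit} actually operates one has $c$ only polynomially small in $1/K$ after a few steps, and the first-moment argument gives nothing.

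The paper closes this gap with precisely the second-moment refinement you conjectured. One lower-bounds the energy $\sum_x 1_A\ast 1_{A'}(x)^2 \geq \sum_x 1_{A'}\ast 1_{A'}(x)^2 \geq |A'|^4/|A'A|$ (the first inequality since $A' \subset A$, the second by Cauchy--Schwarz), then rewrites this quantity via the quadruple-counting identity as $\langle 1_{A^{-1}}\ast 1_A,\ 1_{A'}\ast 1_{A'^{-1}}\rangle$. Now the popular-value split is run against this \emph{weighted} sum rather than the plain sum of $1_{A'}\ast 1_{A'^{-1}}$: the weight $1_{A^{-1}}\ast 1_A$ has total mass $|A|^2$ and sup-norm $|A|$, so splitting at threshold $\tau' := |A'|^4/(2|A'A||A|^2)$ gives $|L||A||A'| + \tau'|A|^2 \geq |A'|^4/|A'A|$, whence $|L| \geq |A'|^3/(2|A'A||A|) \geq c^3|A|/(2K)$ with no constraint on $c$ and $K$. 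Note $\tau' \geq c^4|A|/(2K)$, so this $L$ is a subset of your $T$, and the rest of your argument (the inclusion into the symmetry set, the identification of $A'' = A'_t$ in case (i)) then goes through verbatim. The key difference from your approach is that the weight $1_{A^{-1}}\ast 1_A$ has $\ell^1/\ell^\infty$ ratio $|A|$, far more favourable than the ratio $|A'A'^{-1}|/1$ you get by weighting with the indicator of the support; that is what rescues the count uniformly in $c$ and $K$.
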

\begin{proof}
Since $A' \subset A$ we have that $|A'A'| \leq |A'A|$ and
\begin{equation*}
\sum_{x \in G}{1_A \ast 1_{A'}(x)^2} \geq \sum_{x \in G}{1_{A'} \ast 1_{A'}(x)^2} .
\end{equation*}
Now, the Cauchy-Schwarz inequality can be used to bound the right hand side:
\begin{eqnarray*}
\sum_{x \in G}{1_{A'} \ast 1_{A'}(x)^2}\geq \frac{1}{|A'^2|}\left(\sum_{x \in G}{1_{A'} \ast 1_{A'}(x)}\right)^2.
\end{eqnarray*}
However
\begin{equation*}
\sum_{x \in G}{1_{A'} \ast 1_{A'}(x)}=|A'\times A'| = |A'|^2,
\end{equation*}
and so
\begin{equation*}
\sum_{x \in G}{1_{A'} \ast 1_{A'}(x)^2} \geq  |A'|^4/|A'A|.
\end{equation*}
On the other hand for arbitrary sets $B,C,D,E \subset G$ we have
\begin{equation*}
\langle 1_B\ast 1_C, 1_D \ast 1_E\rangle_{\ell^2(G)} = |\{(b,c,d,e) \in B \times C \times D \times E : bc=de\}|,
\end{equation*}
and $bc=de$ if and only if $d^{-1}b=ec^{-1}$ whence
\begin{equation*}
\langle 1_A \ast 1_{A'}, 1_A \ast 1_{A'} \rangle_{\ell^2(G)} = \langle 1_{A^{-1}} \ast 1_A, 1_{A' }\ast 1_{A'^{-1}} \rangle_{\ell^2(G)}.
\end{equation*}
For $t \in G$ write $A'_t:=A' \cap (tA')$ and define 
\begin{equation*}
L:=\{t \in G: |A'_t| \geq |A'|^4/(2|A'A||A|^2)\}.
\end{equation*}
It is easy to check that
\begin{equation*}
|L||A||A'| + \frac{|A'|^4}{2|A'A||A|^2} . |A|^2 \geq \sum_{x \in G}{1_A \ast 1_{A'}(x)^2} ,
\end{equation*}
from which it follows that
\begin{equation*}
|L|\geq |A'|^3/2|A'A||A| \geq c^3|A|/2K
\end{equation*}
since $|A'A| \leq K|A|$ and $|A'| \geq c|A|$. 

Now, if there is some $t \in L$ such that $|A'_tA| \leq (1-\epsilon)|A'A|$ then we terminate in the first case of the lemma with $A''=A'_t$: simply note that $A'' =A'_t \subset A' \subset A \subset G$,
\begin{equation*}
|A''| = |A'_t| \geq \frac{|A'|^4}{2|A'A||A|^2} \geq \frac{c^4}{2K}|A|,
\end{equation*}
since $|A'A| \leq K|A|$ and $|A'| \geq c|A|$, and
\begin{equation*}
|A''A| \leq (1-\epsilon) |A'A| \leq K(1-\epsilon)|A|.
\end{equation*}

In light of this we may assume that there is no such $t \in L$ \emph{i.e.}
\begin{equation*}
|A'_tA| \geq (1-\epsilon)|A'A| \textrm{ for all } t \in L.
\end{equation*}
However, $A'_t A = (A' \cap tA')A \subset (A'A) \cap t(A'A)$, whence
\begin{equation*}
1_{A'A} \ast 1_{(A'A)^{-1}}(t) \geq (1-\epsilon)|A'A| \textrm{ for all }t \in L,
\end{equation*}
and we are in the second case in view of the lower bound on the size of $L$.
\end{proof}
\begin{proof}[Proof of Proposition \ref{prop.intit}]
We apply Lemma \ref{lem.itlem} iteratively to get a sequence of non-empty sets $(A'_i)_{i}$ satisfying
\begin{equation*}
A'_{i+1} \subset A, |A'_{i+1}| \geq |A|/(2K)^{(4^i-1)/3} \textrm{ and }|A'_iA| \leq (1-\epsilon)^iK|A|.
\end{equation*}
First $A'_0:=A$. Now, suppose that we are at stage $i$ of the iteration and apply Lemma \ref{lem.itlem} to the pair $(A'_i,A)$.  If we are in the first case of the lemma then we get a set $A'_{i+1} \subset A$ with
\begin{equation*}
|A'_{i+1}| \geq (1/(2K)^{(4^i-1)/3})^4|A|/2K = |A|/(2K)^{(4^{i+1}-1)/3}
\end{equation*}
and
\begin{equation*}
|A'_{i+1}A| \leq (1-\epsilon)|A'_iA| \leq (1-\epsilon)^{i+1}K|A|.
\end{equation*}
The sequence $(A'_i)_i$ has the desired properties and in light of the last one the iteration certainly terminates at some stage $i_0$ with $i_0 \leq \lceil \log K / -\log (1-\epsilon)\rceil$ since $A'_i$ is non-empty so $|A'_iA| \geq |A|$.

When the iteration terminates we put $A':=A'_i$ and since we are in the second case of Lemma \ref{lem.itlem} we have the desired result.
\end{proof}
It is worth making a number of remarks.  First, a lower bound for $|A'|$ may also be read out of the proof although in applications it is not clear how useful this information is.  The driving observation in the proof of Lemma \ref{lem.itlem} is that
\begin{equation*}
(A'\cap tA')A \subset (A'A)  \cap (tA'A),
\end{equation*}
so if the left hand side is close to $|A'A|$ in size then $t \in \Sym_{1-o(1)}(A'A)$. This rather cute idea comes from the work of Katz and Koester \cite{NHKPK}, where they use it in abelian groups to show that if a set has doubling $K$ then there is a correlating set with larger additive energy than the trivial Cauchy-Schwarz lower bound.

Finally, at about the same time as this paper was produced Croot and Sisask in \cite{ESCOS} developed a different method for analysing sumsets, which turns out to also work for sets of small doubling in non-abelian groups.  Their argument gives a better bound in Theorem \ref{thm.main} showing that one may take $|S| \geq \exp(-O(k^2K\log K))|A|$. 

\section*{Acknowledgements}

The author would like to thank Ben Green for encouraging the writing of this paper, Ben Green and Terry Tao for useful discussions around this topic and the anonymous referee for many useful suggestions.

\bibliographystyle{alpha}

\bibliography{master}

\begin{thebibliography}{FKP09}

\bibitem[BG10a]{BJGEB2}
E.~Breuillard and B.~J. Green.
\newblock Approximate groups, {II}: the solvable linear case.
\newblock {\em Q. J. Math.}, 2010.
\newblock To appear.

\bibitem[BG10b]{BJGEB1}
E.~Breuillard and B.~J. Green.
\newblock Approximate subgroups, {I}: the torsion-free nilpotent case.
\newblock {\em J. Inst. Math. Jussieu}, 2010.
\newblock To appear.

\bibitem[CS10]{ESCOS}
E.~S. Croot and O.~Sisask.
\newblock A probabilistic technique for finding almost-periods of convolutions.
\newblock {\em Geom. Funct. Anal.}, 2010.
\newblock To appear.

\bibitem[FKP09]{DFNHKIP}
D.~Fischer, N.~H. Katz, and I.~Peng.
\newblock On {F}re{\u\i}man's theorem in nilpotent groups.
\newblock arxiv:math/0901.1409, 2009.

\bibitem[GR07]{BJGIZR}
B.~J. Green and I.~Z. Ruzsa.
\newblock Fre{\u\i}man's theorem in an arbitrary abelian group.
\newblock {\em J. Lond. Math. Soc. (2)}, 75(1):163--175, 2007.

\bibitem[Gre10]{BJG.AGA}
B.~J. Green.
\newblock Approximate groups and their applications: work of {B}ourgain,
  {G}amburd, {H}elfgott and {S}arnak.
\newblock {\em Bull. Amer. Math. Soc.}, 2010.
\newblock To appear.

\bibitem[Hru09]{EH}
E.~Hrushovski.
\newblock Stable group theory and approximate subgroups.
\newblock arXiv:0909.2190, 2009.

\bibitem[KK08]{NHKPK}
N.~H. Katz and P.~Koester.
\newblock On additive doubling and energy.
\newblock arXiv:0802.4371, 2008.

\bibitem[Ruz91]{IZRAA}
I.~Z. Ruzsa.
\newblock Arithmetic progressions in sumsets.
\newblock {\em Acta Arith.}, 60(2):191--202, 1991.

\bibitem[San09]{TSIFEA}
T.~Sanders.
\newblock Indicator functions in the {F}ourier-{E}ymard algebra.
\newblock arXiv:0912.0308, 2009.

\bibitem[San10]{TSSLS}
T.~Sanders.
\newblock Structure in sets with logarithmic doubling.
\newblock arXiv:1002.1552, 2010.

\bibitem[Tao08]{TCTNC}
T.~C. Tao.
\newblock Product set estimates for non-commutative groups.
\newblock {\em Combinatorica}, 28(5):547--594, 2008.

\bibitem[Tao10]{TCTFrei}
T.~C. Tao.
\newblock Fre{\u\i}man's theorem for solvable groups.
\newblock {\em Contrib. Disc. Math.}, 2010.
\newblock To appear.

\bibitem[TV06]{TCTVHV}
T.~C. Tao and H.~V. Vu.
\newblock {\em Additive combinatorics}, volume 105 of {\em Cambridge Studies in
  Advanced Mathematics}.
\newblock Cambridge University Press, Cambridge, 2006.

\end{thebibliography}

\end{document}